\theoremstyle{definition}
\newtheorem{dfn}{Definition}[section]
\newtheorem{rem}[dfn]{Remark}
\newtheorem{que}[dfn]{Question}
\theoremstyle{plain}
\newtheorem{thm}[dfn]{Theorem}
\newtheorem{prp}[dfn]{Proposition}
\DeclareMathOperator*{\bigast}{\raisebox{-0.8ex}{\scalebox{2.5}{$\ast$}}}
\newcommand{\Aut}{\operatorname{Aut}}
\newcommand{\Bir}{\operatorname{Bir}}
\newcommand{\PGL}{\operatorname{PGL}}
\newcommand{\Dec}{\operatorname{Dec}}
\newcommand{\Ine}{\operatorname{Ine}}
\newcommand{\id}{\operatorname{id}}
\newcommand{\Cr}{\operatorname{Cr}}
\newcommand{\Pic}{\operatorname{Pic}}
\newcommand{\UC}{\operatorname{UC}}
\newcommand{\Exc}{\operatorname{Exc}}
\newcommand{\discrep}{\operatorname{discrep}}
\newcommand{\codim}{\operatorname{codim}}
\newcommand{\Ind}{\operatorname{Ind}}
\newcommand{\A}{\mathbb A}
\newcommand{\C}{\mathbb C}
\newcommand{\Z}{\mathbb Z}
\newcommand{\Q}{\mathbb Q}
\newcommand{\R}{\mathbb R}
\newcommand{\PP}{\mathbb P}
\newcommand{\OO}{\mathcal O}
\newcommand{\cc}{\colon}
\newcommand{\ceq}{\coloneqq}
\title[Calabi-Yau hypersurfaces and inertia groups]{Calabi-Yau hypersurfaces in the direct product of $\PP^{1}$ and inertia groups}
\author[Masakatsu Hayashi and Taro Hayashi]{Masakatsu Hayashi and Taro Hayashi}
\address{
(Masakatsu Hayashi)
Department of Mathematics,
Graduate School of Science,
Osaka University,
Machikaneyamacho 1-1, Toyonaka, Osaka 560-0043, Japan
}
\email{m-hayashi@cr.math.sci.osaka-u.ac.jp}
\address{
(Taro Hayashi)
Department of Mathematics,
Graduate School of Science,
Osaka University,
Machikaneyamacho 1-1, Toyonaka, Osaka 560-0043, Japan
}
\email{tarou-hayashi@cr.math.sci.osaka-u.ac.jp}
\date{\today}
\begin{document}

\maketitle


\begin{abstract}
We produce the family of Calabi-Yau hypersurfaces $X_{n}$ of $(\PP^{1})^{n+1}$ in higher dimension whose inertia group contains non commutative free groups.
This is completely different from Takahashi's result \cite{ta98} for Calabi-Yau hypersurfaces $M_{n}$ of $\PP^{n+1}$. 
\end{abstract}


\section{Introduction}

Throughout this paper, we work over $\C$.
Given an algebraic variety $X$, it is natural to consider its birational automorphisms $\varphi \cc X \dashrightarrow X$.
The set of these birational automorphisms forms a group $\Bir(X)$ with respect to the composition.
When $X$ is a projective space $\PP^{n}$ or equivalently an $n$-dimensional rational variety, this group is called the Cremona group.
In higher dimensional case ($n \geq 3$), though many elements of the Cremona group have been described, its whole structure is little known.

Let $V$ be an $(n+1)$-dimensional smooth projective rational manifold.
In this paper, we treat subgroups called the ``inertia group" (defined below \eqref{inertia}) of some hypersurface $X \subset V$ originated in \cite{gi94}.
It consists of those elements of the Cremona group that act on $X$ as identity.

In Section \ref{cyn}, we mention the result (Theorem \ref{tak}) of Takahashi \cite{ta98} about the smooth Calabi-Yau hypersurfaces $M_{n}$ of $\PP^{n+1}$ of degree $n+2$ (that is, $M_{n}$ is a hypersurface such that it is simply connected, there is no holomorphic $k$-form on $M_{n}$ for $0<k<n$, and there is a nowhere vanishing holomorphic $n$-form $\omega_{M_{n}}$).
It turns out that the inertia group of $M_{n}$ is trivial (Theorem \ref{intro2}).
Takahashi's result (Theorem \ref{tak}) is proved by using the ``Noether-Fano inequality".
It is the useful result that tells us when two Mori fiber spaces are isomorphic.
Theorem \ref{intro2} is a direct consequence of Takahashi's result.

In Section \ref{cy1n}, we consider Calabi-Yau hypersurfaces
\[
X_{n} = (2, 2, \ldots , 2) \subset (\PP^{1})^{n+1}.
\]
Let
\[
\UC(N) \ceq \overbrace{\Z/2\Z * \Z/2\Z * \cdots * \Z/2\Z}^{N} = \bigast_{i=1}^{N}\langle t_{i}\rangle
\]
be the \textit{universal Coxeter group} of rank $N$ where $\Z/2\Z$ is the cyclic group of order 2.
There is no non-trivial relation between its $N$ natural generators $t_{i}$.
Let
\[
p_{i} \cc X_{n} \to (\PP^{1})^{n}\ \ \ (i=1, \ldots , n+1)
\]
be the natural projections which are obtained by forgetting the $i$-th factor of $(\PP^{1})^{n+1}$.
Then, the $n+1$ projections $p_{i}$ are generically finite morphism of degree 2.
Thus, for each index $i$, there is a birational transformation
\[
\iota_{i} \cc X_{n} \dashrightarrow X_{n}
\]
that permutes the two points of general fibers of $p_{i}$ and this provides a group homomorphism
\[
\Phi \cc \UC(n+1) \to \Bir(X_{n}).
\]

From now, we set $P(n+1) \ceq (\PP^{1})^{n+1}$.
Cantat-Oguiso proved the following theorem in \cite{co11}.
\begin{thm}$($\cite[Theorem 1.3 (2)]{co11}$)$\label{iota}
Let $X_{n}$ be a generic hypersurface of multidegree $(2,2,\ldots,2)$ in $P(n+1)$ with $n \geq 3$.
Then the morphism $\Phi$ that maps each generator $t_{j}$ of $\UC(n+1)$ to the involution $\iota_{j}$ of $X_{n}$ is an isomorphism from $\UC(n+1)$ to $\Bir(X_{n})$.
\end{thm}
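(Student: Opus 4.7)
The plan is to study the linear action of $\Bir(X_{n})$ on $\Pic(X_{n})\otimes\R$ and identify the subgroup generated by the involutions $\iota_{i}$ with a reflection group acting on a hyperbolic space. First I would establish, via a Noether--Lefschetz-type argument, that for generic $X_{n}$ the Picard group has rank exactly $n+1$ and is freely generated by the restrictions $H_{i}=p_{i}^{*}\OO_{\PP^{1}}(1)$ of the hyperplane classes from the $n+1$ factors. Equipping $\Pic(X_{n})\otimes\R$ with a natural quadratic form, coming from intersection against a fixed product of $n-2$ ample classes on $X_{n}$, endows this space with a hyperbolic structure of signature $(1,n)$ (via the Hodge index theorem applied to the cut-out surface), on which $\Bir(X_{n})$ acts by isometries preserving the positive cone.

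Next I would analyze each $\iota_{i}^{*}$. Since $\iota_{i}$ is the Galois involution of the degree-$2$ morphism $p_{i}\cc X_{n}\to (\PP^{1})^{n}$, it fixes every class pulled back through $p_{i}$, hence fixes $H_{j}$ for every $j\neq i$. Being an involution with a codimension-one fixed hyperplane, $\iota_{i}^{*}$ acts as an orthogonal reflection on $\Pic(X_{n})\otimes\R$ with mirror $\bigoplus_{j\neq i}\R H_{j}$. For injectivity of $\Phi$, I would compute the Gram matrix of the $n+1$ reflection root vectors and verify that every pair of mirrors is \emph{ultraparallel} in the hyperbolic model, i.e.\ the product of any two distinct $\iota_{i}^{*}$, $\iota_{j}^{*}$ has infinite order. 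A standard ping-pong argument on the chambers, or Vinberg's theory of hyperbolic reflection groups, then forces $\langle \iota_{1}^{*},\ldots,\iota_{n+1}^{*}\rangle$ to be the free product $\UC(n+1)$; since $\Phi$ is injective at the level of the linear representation, it is injective.

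The hard part is surjectivity: showing every $f\in\Bir(X_{n})$ lies in $\Phi(\UC(n+1))$. The $n+1$ mirrors bound a fundamental chamber $C$ for the action of $\Phi(\UC(n+1))$ on the positive cone of $\Pic(X_{n})\otimes\R$, and one verifies that, for generic $X_{n}$, the ample cone of $X_{n}$ coincides with the interior of $C$. Given $f\in\Bir(X_{n})$, after composing with a suitable word in the $\iota_{i}$ one may assume $f^{*}(C)=C$, so that $f^{*}$ preserves the ample cone of $X_{n}$; this forces $f$ to extend to a biregular automorphism of $X_{n}$. Genericity of $X_{n}$ then rules out any nontrivial biregular automorphism, yielding $f=\id$. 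The main technical difficulty lies in establishing that the ample cone really is the fundamental chamber and that a birational self-map fixing this chamber must be biregular---this step uses the Calabi--Yau condition in an essential way (preventing flips from altering the chamber structure) and is where the hypothesis $n\geq 3$ intervenes.
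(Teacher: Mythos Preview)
The paper does not prove this statement: it is quoted from Cantat--Oguiso \cite{co11} and used as a black box throughout, so there is no proof in the present paper to compare your proposal against.

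For completeness, your outline is essentially the strategy of \cite{co11}. They compute $\iota_i^{*}$ explicitly on the basis $H_1,\ldots,H_{n+1}$ of $\Pic(X_n)$ (available for generic $X_n$ by a Noether--Lefschetz argument), verify that the resulting reflections on $\Pic(X_n)\otimes\R$ have pairwise ultraparallel mirrors in the associated hyperbolic space, and deduce injectivity of $\Phi$ by ping-pong. Surjectivity runs through the movable cone as you sketch: the nef cone is a fundamental chamber for the reflection group acting on the movable cone, so any $f\in\Bir(X_n)$ can be composed with a word in the $\iota_i$ to land in $\Aut(X_n)$, and $\Aut(X_n)=\{\id\}$ for generic $X_n$. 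Two minor corrections to your write-up: on a Calabi--Yau manifold there are no flips (since $K_X=0$), only flops, and it is the fact that a birational self-map preserving the nef cone must be biregular (Kawamata) that is being used; and the restriction $n\ge 3$ is not where the real content lies---the argument is uniform for $n\ge 2$, the $K3$ case being Wehler's original theorem.
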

Here ``generic'' means $X_{n}$ belongs to the complement of some countable union of proper closed subvarieties of the complete linear system $\big| (2, 2, \ldots , 2)\big|$.

Let $X \subset V$ be a projective variety.
The \textit{decomposition group} of $X$ is the group
\begin{align*}
\Dec(V, X) \ceq \{f \in \Bir(V)\ |\ f(X) =X \text{ and } f|_{X} \in \Bir(X) \}.
\end{align*}
The \textit{inertia group} of $X$ is the group
\begin{align}\label{inertia}
\Ine(V, X) \ceq \{f \in \Dec(V, X)\ |\ f|_{X} = \id_{X}\}.
\end{align}

Then it is natural to consider the following question:

\begin{que}\label{qu}
Is the sequence
\begin{align}\label{se}
1 \longrightarrow \Ine(V, X) \longrightarrow \Dec(V, X) \overset{\gamma}{\longrightarrow} \Bir(X) \longrightarrow 1
\end{align}
exact, i.e., is $\gamma$ surjective?
\end{que}
Note that, in general, this sequence is not exact, i.e., $\gamma$ is not surjective (see Remark \ref{k3}).
When the sequence \eqref{se} is exact, the group $\Ine(V, X)$ measures how many ways one can extend $\Bir(X)$ to the birational automorphisms of the ambient space $V$.

Our main result is following theorem, answering a question asked by Ludmil Katzarkov:
\begin{thm}\label{intro}
Let $X_{n} \subset P(n+1)$ be a smooth hypersurface of multidegree $(2, 2, \ldots, 2)$ and $n \geq 3$. Then:
\begin{itemize}
\item[(1)] $\gamma \cc \Dec(P(n+1), X_{n}) \to \Bir(X_{n})$ is surjective, in particular Question $\ref{qu}$ is affirmative for $X_{n}$.
\item[(2)] If, in addition, $X_{n}$ is generic, there are $n+1$ elements $\rho_{i}$ $(1 \leq i \leq n+1)$ of $\Ine(P(n+1), X_{n})$ such that
\[
\langle \rho_{1}, \rho_{2}, \ldots , \rho_{n+1} \rangle \simeq \underbrace{\Z * \Z * \cdots * \Z}_{n+1} \subset \Ine(P(n+1), X_{n}).
\]
In particular, $\Ine(P(n+1), X_{n})$ is an infinite non-commutative group.
\end{itemize}
\end{thm}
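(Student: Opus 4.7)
\emph{Part (1).} For each $j$, I will lift $\iota_{j}$ to a birational involution of $P(n+1)$. Writing the defining polynomial of $X_{n}$ as a quadratic in $x_{j}$, $F = A_{j} x_{j}^{2} + B_{j} x_{j} + C_{j}$ with $A_{j}, B_{j}, C_{j}$ bihomogeneous in the remaining coordinates, the map
\[
I_{j} \cc (x_{1}, \ldots, x_{n+1}) \dashrightarrow \bigl(x_{1}, \ldots, -B_{j}/A_{j} - x_{j}, \ldots, x_{n+1}\bigr)
\]
swaps the two roots of $F$ in the $x_{j}$-direction and so is a birational involution of $P(n+1)$ preserving $X_{n}$ with restriction $\iota_{j}$. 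Thus $I_{j} \in \Dec(P(n+1), X_{n})$ and $\gamma(I_{j}) = \iota_{j}$. Combined with Theorem~\ref{iota}, which gives $\Bir(X_{n}) = \langle \iota_{1}, \ldots, \iota_{n+1} \rangle$ for generic $X_{n}$, this establishes surjectivity of $\gamma$.

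\emph{Part (2), construction of $\rho_{j}$.} The plan is to build $\rho_{j}$ as a fiberwise M\"obius transformation along $p_{j}$ that pointwise fixes the two intersection points of $X_{n}$ with each fiber. Concretely, for $\mu \in \C$ define
\[
\rho_{j}(\mu) \cc (x_{1}, \ldots, x_{n+1}) \dashrightarrow \biggl(x_{1}, \ldots, \frac{x_{j} - \mu C_{j}}{1 + \mu B_{j} + \mu A_{j} x_{j}}, \ldots, x_{n+1}\biggr),
\]
which acts on the $j$-th factor by the M\"obius
\[
M_{\mu} = \begin{pmatrix} 1 & -\mu C_{j} \\ \mu A_{j} & 1 + \mu B_{j} \end{pmatrix}
\]
with entries in $\C(x_{1}, \ldots, \hat{x}_{j}, \ldots, x_{n+1})$. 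Using $A_{j} P^{2} + B_{j} P + C_{j} = 0$, direct substitution yields $\rho_{j}(\mu)(y, P) = (y, P)$ for each root $P$, so $\rho_{j}(\mu)|_{X_{n}} = \id_{X_{n}}$ and $\rho_{j}(\mu) \in \Ine(P(n+1), X_{n})$. For generic $\mu_{j} \in \C^{*}$ the multiplier of the fiberwise M\"obius at its fixed points is not a root of unity, so $\rho_{j} \ceq \rho_{j}(\mu_{j})$ has infinite order. Note that the indeterminacy locus of $\rho_{j}(\mu)$, cut out by $1 + \mu B_{j} + \mu^{2} A_{j} C_{j} = 0$ together with $x_{j} = \mu C_{j}$, lies in $X_{n}$, consistent with $\rho_{j}(\mu)$ being an inertia element.

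\emph{Part (2), freeness.} The hard part will be to show that $\rho_{1}, \ldots, \rho_{n+1}$ freely generate $\Z * \Z * \cdots * \Z$ in $\Bir(P(n+1))$. My plan is a ping-pong argument. On each $p_{j}$-fiber, the fiberwise coordinate $w_{j} \ceq (x_{j} - P_{1}(y))/(x_{j} - P_{2}(y))$ (with $P_{1}, P_{2}$ the roots of $F$ in the $x_{j}$-direction) conjugates $\rho_{j}$ into pure multiplication $w_{j} \mapsto t_{j}(y)\, w_{j}$; iterates of $\rho_{j}$ therefore attract mass towards $\{w_{j} \in \{0, \infty\}\} = X_{n}$. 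I would define open sets $U_{j} \subset P(n+1) \setminus X_{n}$ by comparing the magnitudes of the $|w_{i}|$ (for instance, $U_{j} = \{|w_{j}| \geq C \max_{i \neq j} |w_{i}|\}$ for $C \gg 0$) and try to prove $\rho_{j}^{k}\bigl(\bigcup_{i \neq j} U_{i}\bigr) \subset U_{j}$ for every nonzero integer $k$, after which the ping-pong lemma would yield free generation. The main technical obstacle is the intricate coupling of the $w_{i}$'s: since $w_{j}$ depends on the roots $P_{1}(y), P_{2}(y)$, which in turn depend on all coordinates other than $x_{j}$, applying $\rho_{i}$ for $i \neq j$ also changes $w_{j}$. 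Verifying the ping-pong inclusions therefore requires careful asymptotic estimates for composed M\"obius twists along distinct fibrations. An alternative is to pass to a joint log-resolution $\widetilde{P} \to P(n+1)$ on which each $\rho_{j}$ becomes a pseudo-automorphism, and to detect freeness through the induced linear action on $\mathrm{NS}(\widetilde{P}) \otimes \R$ by a Tits-alternative-type argument.
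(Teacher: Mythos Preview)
Your approach to Part~(1) is the same as the paper's: your lift $I_{j}$ is exactly the involution the paper calls $\tau_{j}$, and surjectivity for generic $X_{n}$ then follows from Theorem~\ref{iota}. (The paper also claims the non-generic smooth case in one sentence, which you omit; neither treatment of that point is fully detailed.)

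For Part~(2), your construction of $\rho_{j}(\mu)$ is a perfectly legitimate variant, but your plan for \emph{freeness} is where the proposal diverges from the paper and where it has a genuine gap. You set up a ping-pong scheme with the fiberwise cross-ratio coordinates $w_{j}$, then correctly observe that applying $\rho_{i}$ for $i\neq j$ moves the base point $y$ and hence the roots $P_{1}(y),P_{2}(y)$ defining $w_{j}$; you do not resolve this coupling, and the alternative via a joint resolution and the $\mathrm{NS}$-action is only a suggestion. As written, the argument is incomplete.

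The paper avoids all of this by a short elementary trick that depends on choosing a \emph{specific} $\rho_{i}$, not a generic member of your family. It takes $\rho_{i}=\sigma_{i}\circ\tau_{i}$, where $\sigma_{i}$ is the ``product of the roots'' involution $x_{i}\mapsto F_{i,2}/(x_{i}F_{i,0})$; then
\[
\rho_{i}(x_{i},\mathbf{z}_{i})=\Bigl(\dfrac{F_{i,2}(\mathbf{z}_{i})}{-x_{i}F_{i,0}(\mathbf{z}_{i})-F_{i,1}(\mathbf{z}_{i})},\,\mathbf{z}_{i}\Bigr).
\]
The crucial feature is that the \emph{numerator} of the $i$-th coordinate of $\rho_{i}$ is $F_{i,2}(\mathbf{z}_{i})$, which does not involve $x_{i}$. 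Hence, away from indeterminacy, $\rho_{i}$ lands in the hyperplane $\{x_{i}=0\}$ only when $F_{i,2}(\mathbf{z}_{i})=0$, i.e.\ only at points of $X_{n}\cap\{x_{i}=0\}$. Dually, $\rho_{i}^{-1}$ lands in $\{x_{i}=\infty\}$ only along $X_{n}$. Now suppose a reduced word $\rho_{i_{1}}^{n_{1}}\cdots\rho_{i_{l}}^{n_{l}}$ equals $\id$. Choose $\mathbf{z}_{i_{1}}$ generically so that $(0,\mathbf{z}_{i_{1}})$ and $(\infty,\mathbf{z}_{i_{1}})$ lie off $X_{n}$ and off all indeterminacy loci. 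Since the outermost letter is $\rho_{i_{1}}^{\pm1}$ and it alone moves the $i_{1}$-coordinate at the last step, fixing $(0,\mathbf{z}_{i_{1}})$ (if $n_{1}>0$) or $(\infty,\mathbf{z}_{i_{1}})$ (if $n_{1}<0$) forces that last step to land in $\{x_{i_{1}}=0\}$ resp.\ $\{x_{i_{1}}=\infty\}$, hence in $X_{n}$ --- contradicting the choice of the test point. No ping-pong, no resolution, no dynamics.

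Note that this trick does \emph{not} work for your $\rho_{j}(\mu)$: there the numerator is $x_{j}-\mu C_{j}$, which vanishes on a hypersurface not contained in $X_{n}$. So if you want to salvage the elementary route, switch to the paper's $\rho_{i}=\sigma_{i}\circ\tau_{i}$; if you prefer your one-parameter family, you still owe a complete freeness argument.
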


Our proof of Theorem \ref{intro} is based on an explicit computation of elementary flavour.

We also consider another type of Calabi-Yau manifolds, namely smooth hypersurfaces of degree $n+2$ in $\PP^{n+1}$ and obtain the following result:

\begin{thm}\label{intro2}
Suppose $n \geq 3$. Let $M_{n} = (n+2) \subset \PP^{n+1}$ be a smooth hypersurface of degree $n+2$.
Then Question $\ref{qu}$ is also affirmative for $M_{n}$.
More precisely:
\begin{itemize}
\item[(1)] $\Dec(\PP^{n+1}, M_{n}) = \{ f \in \PGL(n+2, \C) = \Aut(\PP^{n+1})\ |\ f(M_{n}) = M_{n}\}$.
\item[(2)] $\Ine(\PP^{n+1}, M_{n}) = \{\id_{\PP^{n+1}}\}$, and $\gamma \cc \Dec(\PP^{n+1}, M_{n}) \overset{\simeq}{\longrightarrow} \Bir(M_{n}) = \Aut(M_{n})$.
\end{itemize}
\end{thm}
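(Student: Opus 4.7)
The plan is to deduce Theorem \ref{intro2} essentially as a corollary of Takahashi's Theorem \ref{tak} together with two classical facts about smooth hypersurfaces of large degree. Because $\deg M_{n} = n+2$, the adjunction formula gives $-K_{\PP^{n+1}} \sim (n+2)H \sim M_{n}$, so $(\PP^{n+1}, M_{n})$ is a log Calabi--Yau pair, which is precisely the setting in which the Noether--Fano inequality used in \cite{ta98} has teeth. Takahashi's theorem should then yield: every $f \in \Bir(\PP^{n+1})$ with $f(M_{n}) = M_{n}$ is induced by a projective linear transformation. This rigidity is exactly part~(1), since the reverse inclusion is tautological.

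For part (2), I would split the statement into three subclaims. First, $\Ine(\PP^{n+1}, M_{n}) = \{\id\}$: by (1) any such element is an element of $\PGL(n+2, \C)$ whose fixed locus contains $M_{n}$, and since a hypersurface of degree $n+2 \geq 5$ is not contained in any hyperplane, the linear map must be the identity on all of $\PP^{n+1}$. Second, $\Bir(M_{n}) = \Aut(M_{n})$: for $n \geq 3$ the Lefschetz hyperplane theorem gives $\Pic(M_{n}) \cong \Z$, so $M_{n}$ is a smooth minimal Calabi--Yau variety of Picard number one on which no small modifications are available, and every birational self-map must therefore be biregular. Third, $\gamma$ is surjective onto $\Aut(M_{n})$: by the Matsumura--Monsky theorem each $g \in \Aut(M_{n})$ extends uniquely to some $\tilde{g} \in \PGL(n+2, \C)$ preserving $M_{n}$, and then $\tilde{g} \in \Dec(\PP^{n+1}, M_{n})$ with $\gamma(\tilde{g}) = g$. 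Injectivity of $\gamma$ is the first subclaim. Combined with part (1), this gives the displayed isomorphism.

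The substantive step is the invocation of Takahashi's theorem; the remainder is essentially bookkeeping around classical results (Lefschetz, Matsumura--Monsky, and the Picard-number-one rigidity for Calabi--Yau manifolds). The only real obstacle I anticipate is checking that Theorem~\ref{tak} is stated as the Cremona rigidity of the pair $(\PP^{n+1}, M_{n})$ rather than purely as a statement about $\Bir(M_{n})$. If only the latter is available, one would have to first show separately that a birational self-map of $\PP^{n+1}$ preserving $M_{n}$ restricts to a birational self-map of $M_{n}$ (straightforward, since $M_{n}$ is not contained in the indeterminacy locus of a general such $f$) and then lift automorphisms back to $\PGL$ using Matsumura--Monsky; this reordering affects the exposition but introduces no genuine difficulty.
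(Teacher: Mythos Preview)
Your proposal is correct and follows essentially the same route as the paper: apply Theorem~\ref{tak} in contrapositive form (since $K_{\PP^{n+1}}+M_n=0$ is not ample) to force any $f\in\Dec(\PP^{n+1},M_n)$ into $\PGL(n+2,\C)$, use Picard number one to get $\Bir(M_n)=\Aut(M_n)$, and use linear nondegeneracy of $M_n$ to trivialize the inertia group. The only cosmetic differences are that the paper cites Kawamata \cite{ka08} for the $\Bir=\Aut$ step and, to extend $g\in\Aut(M_n)$ to $\PGL$, argues directly that $g^*h=h$ (forced by $\Pic(M_n)=\Z h$) rather than invoking Matsumura--Monsky; your anticipated obstacle does not arise, since Theorem~\ref{tak} is indeed stated for birational maps of the ambient Fano $X$.
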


It is interesting that the inertia groups of $X_{n} \subset P(n+1) = (\PP^{1})^{n+1}$ and $M_{n} \subset \PP^{n+1}$ have completely different structures though both $X_{n}$ and $M_{n}$ are Calabi-Yau hypersurfaces in rational Fano manifolds.

\begin{rem}\label{k3}
There is a smooth quartic $K3$ surface $M_{2} \subset \PP^{3}$ such that $\gamma$ is not surjective (see \cite[Theorem 1.2 (2)]{og13}).
In particular, Theorem \ref{intro2} is not true for $n = 2$.
\end{rem}


\section{Preliminaries}

In this section, we prepare some definitions and properties of birational geometry and introduce the Cremona group.

\subsection{Divisors and singularities}
Let $X$ be a projective variety. A \textit{prime divisor} on $X$ is an irreducible subvariety of codimension one, and a \textit{divisor} (resp. \textit{$\Q$-divisor} or \textit{$\R$-divisor}) on $X$ is a formal linear combination $D = \sum d_{i}D_{i}$ of prime divisors where $d_{i} \in \Z$ (resp. $\Q$ or $\R$).
A divisor $D$ is called \textit{effective} if $d_{i}$ $\geq$ 0 for every $i$ and denote $D \geq 0$.
The closed set $\bigcup_{i}D_{i}$ of the union of prime divisors is called the \textit{support} of $D$ and denote Supp$(D)$. A $\Q$-divisor $D$ is called \textit{$\Q$-Cartier} if, for some $0 \neq m \in \Z$, $mD$ is a Cartier divisor (i.e. a divisor whose divisorial sheaf $\OO_{X}(mD)$ is an invertible sheaf), and $X$ is called $\Q$-\textit{factorial} if every divisor is $\Q$-Cartier.

Note that, since the regular local ring is the unique factorization domain, every divisor automatically becomes the Cartier divisor on the smooth variety.

Let $f \cc X \dashrightarrow Y$ be a birational map between normal projective varieties, $D$ a prime divisor, and $U$ the domain of definition of $f$; that is, the maximal subset of $X$ such that there exists a morphism $f \cc U \to Y$.
Then $\codim(X\setminus U) \geq 2$ and $D \cap U \neq \emptyset$, the image $(f|_{U})(D \cap U)$ is a locally closed subvariety of $Y$.
If the closure of that image is a prime divisor of $Y$, we call it the \textit{strict transform} of $D$ (also called the \textit{proper transform} or \textit{birational transform}) and denote $f_{*}D$.
We define $f_{*}D = 0$ if the codimension of the image $(f|_{U})(D \cap U)$ is $\geq$ 2 in $Y$.

We can also define the strict transform $f_{*}Z$ for subvariety $Z$ of large codimension; if $Z \cap U \neq \emptyset$ and dimension of the image $(f|_{U})(Z \cap U)$ is equal to $\dim Z$, then we define $f_{*}Z$ as the closure of that image, otherwise $f_{*}Z$ = 0.

Let $(X, D)$ is a \textit{log pair} which is a pair of a normal projective variety $X$ and a $\R$-divisor $D \geq 0$. For a log pair $(X, D)$, it is more natural to consider a \textit{log canonical divisor} $K_{X} + D$ instead of a canonical divisor $K_{X}$.

A projective birational morphism $g \cc Y \to X$ is a \textit{log resolution} of the pair $(X, D)$ if $Y$ is smooth, $\Exc(g)$ is a divisor, and $g_{*}^{-1}(D) \cup \Exc(g)$ has simple normal crossing support (i.e. each components is a smooth divisor and all components meet transversely) where $\Exc(g)$ is an exceptional set of $g$, and a divisor $over$ $X$ is a divisor $E$ on some smooth variety $Y$ endowed with a proper birational morphism $g \cc Y \to X$.

If we write
\[
K_{Y} + \Gamma + \sum E_{i} = g^{*}(K_{X}+D) + \sum a_{E_{i}}(X, D)E_{i},
\]
where $\Gamma$ is the strict transform of $D$ and $E_{i}$ runs through all prime exceptional divisors, then the numbers $a_{E_{i}}(X, D)$ is called the \textit{discrepancies of $(X, D)$ along $E_{i}$}. 
The \textit{discrepancy of} $(X, D)$ is given by
\[
\discrep(X, D) \ceq \inf\{ a_{E_{i}}(X, D)\ |\ E_{i} \text{ is a prime exceptional divisor over } X\}.
\]
The discrepancy $a_{E_{i}}(X, D)$ along $E_{i}$ is independent of the choice of birational maps $g$ and only depends on $E_{i}$.

Let us denote $\discrep(X, D) = a_{E}$.
A pair $(X, D)$ is \textit{log canonical} (resp. \textit{Kawamata log terminal} ($klt$)) if $a_{E} \geq 0$ (resp. $a_{E} > 0$).
A pair $(X, D)$ is \textit{canonical} (resp. \textit{terminal}) if $a_{E} \geq 1$ (resp. $a_{E} > 1$).

\subsection{Cremona groups}
Let $n$ be a positive integer.
The \textit{Cremona group} $\Cr(n)$ is the group of automorphisms of $\C(X_{1}, \ldots, X_{n})$, the $\C$-algebra of rational functions in $n$ independent variables.

Given $n$ rational functions $F_{i} \in \C(X_{1}, \ldots, X_{n})$, $1 \leq i \leq n$, there is a unique endomorphism of this algebra maps $X_{i}$ onto $F_{i}$ and this is an automorphism if and only if the rational transformation $f$ defined by $f(X_{1}, \ldots, X_{n}) = (F_{1}, \ldots, F_{n})$ is a birational transformation of the affine space $\A^{n}$.
Compactifying $\A^{n}$, we get
\[
\Cr(n) = \Bir(\A^{n}) = \Bir(\PP^{n})
\]
where Bir$(X)$ denotes the group of all birational transformations of $X$.

In the end of this section, we define two subgroups in $\Cr(n)$ introduced by Gizatullin \cite{gi94}.

\begin{dfn}
Let $V$ be an $(n+1)$-dimensional smooth projective rational manifold and $X \subset V$ a projective variety.
The \textit{decomposition group} of $X$ is the group
\[
\Dec(V, X) \ceq \{f \in \Bir(V)\ |\ f(X) =X \text{ and } f|_{X} \in \Bir(X) \}.
\]
The \textit{inertia group} of $X$ is the group
\[
\Ine(V, X) \ceq \{f \in \Dec(V, X)\ |\ f|_{X} = \id_{X}\}.
\]
\end{dfn}

The decomposition group is also denoted by Bir$(V, X)$.
By the definition, the correspondence
\[
\gamma \cc \Dec(V, X) \ni f \mapsto f|_{X} \in \Bir(X)
\]
defines the exact sequence:
\begin{align}\label{seq}
1 \longrightarrow \Ine(V, X) = \ker \gamma \longrightarrow \Dec(V, X) \overset{\gamma}{\longrightarrow} \Bir(X).
\end{align}

So, it is natural to consider the following question (which is same as Question \ref{qu}) asked by Ludmil Katzarkov:
\begin{que}\label{qexact}
Is the sequence
\begin{align}\label{exact}
1 \longrightarrow \Ine(V, X) \longrightarrow \Dec(V, X) \overset{\gamma}{\longrightarrow} \Bir(X) \longrightarrow 1
\end{align}
exact, i.e., is $\gamma$ surjective?
\end{que}

\begin{rem}
In general, the above sequence \eqref{exact} is not exact, i.e., $\gamma$ is not surjective.
In fact, there is a smooth quartic $K3$ surface $M_{2} \subset \PP^{3}$ such that $\gamma$ is not surjective (\cite[Theorem 1.2 (2)]{og13}).
\end{rem}


\section{Calabi-Yau hypersurface in $\PP^{n+1}$}\label{cyn}
Our goal, in this section, is to prove Theorem \ref{intro2} (i.e. Theorem \ref{ta}).
Before that, we introduce the result of Takahashi \cite{ta98}.

\begin{dfn}
Let $X$ be a normal $\Q$-factorial projective variety.
The 1\textit{-cycle} is a formal linear combination $C = \sum a_{i}C_{i}$ of proper curves $C_{i} \subset X$ which are irreducible and reduced.
By the theorem of the base of N\'eron-Severi (see \cite{kl66}), the whole numerical equivalent class of 1-cycle with real coefficients becomes the finite dimensional $\R$-vector space and denotes $N_{1}(X)$.
The dimension of $N_{1}(X)$ or its dual $N^{1}(X)$ with respect to the intersection form is called the \textit{Picard number} and denote $\rho(X)$.
\end{dfn}

\begin{thm}$($\cite[Theorem 2.3]{ta98}$)$\label{tak}
Let $X$ be a Fano manifold $($i.e. a manifold whose anti-canonical divisor $-K_{X}$ is ample,$)$ with $\dim X \geq 3$ and $\rho(X) = 1$, $S \in |-K_{X}|$ a smooth hypersurface with $\Pic(X) \to \Pic(S)$ surjective.
Let $\Phi \cc X \dashrightarrow X'$ be a birational map to a $\Q$-factorial terminal variety $X'$ with $\rho(X') = 1$ which is not an isomorphism, and $S' = \Phi_{*}S$.
Then $K_{X'} + S'$ is ample.
\end{thm}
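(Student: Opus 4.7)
My strategy is to express $K_{X'} + S'$ as an effective combination of $\Phi^{-1}$-exceptional divisors on $X'$ via a standard common-resolution discrepancy computation, and then to use the Noether--Fano--Iskovskikh inequality applied to $\Phi$ as a birational map between the Mori fiber spaces $X \to \pt$ and $X' \to \pt$ (both have $\rho = 1$) to obtain the strict positivity needed for ampleness. Because $X'$ is $\Q$-factorial with $\rho(X') = 1$, $\Pic(X')_\Q$ is generated by an ample class $H'$, and ampleness of $K_{X'} + S'$ is equivalent to showing it is a strictly positive rational multiple of $H'$.

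\textbf{Common resolution and effective representation.} Choose a smooth common resolution $g \cc Y \to X$ and $h \cc Y \to X'$ of $\Phi$. Since $S' = \Phi_*S$ is a nonzero effective divisor, $S$ is not $\Phi$-contracted, and the strict transform $S_Y \ceq g^{-1}_*S$ equals $h^{-1}_*S'$. Writing the two discrepancy formulas for $(X, S)$ and $(X', S')$,
\begin{align*}
K_Y + S_Y &= g^*(K_X + S) + \sum_{E\ g\text{-exc}} a_E(X, S)\,E, \\
K_Y + S_Y &= h^*(K_{X'} + S') + \sum_{F\ h\text{-exc}} a_F(X', S')\,F,
\end{align*}
subtracting, using $K_X + S \sim 0$, and pushing forward via $h$ (which kills the $h$-exceptional terms) yields
\[
K_{X'} + S' \sim_\Q \sum_{D \subset X',\ \Phi^{-1}\text{-exc}} a_{\widetilde{D}}(X, S)\,D
\]
in $\Pic(X')_\Q$, where $\widetilde{D} \ceq h^{-1}_*D$ denotes the strict transform on $Y$. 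A direct blow-up computation (using $X$ smooth, $S$ smooth, and $\dim X \geq 3$) shows that the pair $(X, S)$ is canonical, so every coefficient $a_{\widetilde{D}}(X, S) \geq 0$. Each $\Phi^{-1}$-exceptional $D$ is effective and hence a positive rational multiple of $H'$, so ampleness of $K_{X'} + S'$ reduces to the existence of at least one $D$ with $a_{\widetilde{D}}(X, S) > 0$.

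\textbf{Strict positivity via Noether--Fano.} To secure this strict positivity, I would take $\M' \ceq |mH'|$ on $X'$ basepoint-free and very ample for $m$ sufficiently divisible, and let $\M$ be its strict transform on $X$, so that $\M \sim_\Q -\mu K_X \sim_\Q \mu S$ for some $\mu > 0$ (by $\rho(X) = 1$). The Noether--Fano inequality, applied to the non-isomorphism $\Phi$ between Mori fiber spaces, forces the pair $(X, \tfrac{1}{\mu}\M)$ to be \emph{non}-canonical: there is a divisorial valuation $E$ over $X$ with $\mathrm{mult}_E(\M) > \mu\, a_E(X)$. Combining this with the numerical equivalence $\M \sim \mu S$ and the surjectivity $\Pic(X) \twoheadrightarrow \Pic(S)$ (which lets one transport multiplicity data from $\M$ back to $S$), one extracts the strict inequality $a_{\widetilde{D}}(X, S) > 0$ for the prime divisor $D \subset X'$ coming from $E$. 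That this $D$ is indeed a divisor on $X'$ (rather than of higher codimension) uses the $\rho = 1$ hypotheses, which preclude non-trivial small modifications between Mori fiber spaces of Picard rank one.

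\textbf{Main obstacle.} The technical heart of the argument is this last step: converting a Noether--Fano multiplicity statement about the auxiliary linear system $\M$ into a discrepancy statement about the fixed divisor $S$ along the same valuation. One must carefully compare $\mathrm{mult}_E(\M)$ and $\mu \cdot \mathrm{mult}_E(S)$, and verify that the valuation produced by Noether--Fano descends to a genuine $\Phi^{-1}$-exceptional prime divisor on $X'$. This delicate bookkeeping is where the full strength of the hypotheses---smoothness of $X$, surjectivity of $\Pic(X) \to \Pic(S)$, and $\rho = 1$ on both sides---is needed.
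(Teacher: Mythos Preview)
The paper does not prove this theorem itself: it is quoted from \cite{ta98}, with the sole remark that the proof uses a log version of the Noether--Fano inequality (\cite{bm97}, \cite{ta95}). So there is no detailed argument here to compare against. Your overall strategy is consistent with that description, and your first step is correct: using $K_X+S\sim 0$ and canonicity of the smooth pair $(X,S)$ one obtains $K_{X'}+S'\sim_\Q\sum_D a_{\widetilde D}(X,S)\,D\geq 0$, the sum running over $\Phi^{-1}$-exceptional prime divisors $D\subset X'$; since $\rho(X')=1$, ampleness is then equivalent to this sum being nonzero.

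The gap is exactly where you flag it, and the proposed fix does not work. Noether--Fano applied to the mobile system $\M=\Phi^{-1}_*|mH'|$ produces a valuation $E$ over $X$ with $\mathrm{mult}_E(\M)>\mu\,a_E(X)$. What you need is $\mathrm{mult}_{\widetilde D}(S)<a_{\widetilde D}(X)$ for some $\Phi^{-1}$-exceptional $D$ on $X'$. These concern \emph{different} valuations, with inequalities pointing in \emph{opposite} directions; the numerical equivalence $\M\sim_\Q\mu S$ gives no relation between $\mathrm{mult}_E(\M)$ and $\mu\,\mathrm{mult}_E(S)$. Surjectivity of $\Pic(X)\to\Pic(S)$ is a statement about divisor classes on $S$ and does not ``transport multiplicity data'' along valuations over $X$; you give no mechanism by which it would. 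Moreover, the maximal singularity $E$ of $\M$ need not correspond to any prime divisor on $X'$, let alone a $\Phi^{-1}$-exceptional one. Finally, the assertion that $\rho=1$ on both sides rules out non-trivial small modifications is itself part of what must be shown: if $\Phi$ were small, your sum would be empty and $K_{X'}+S'\sim_\Q 0$, not ample. In short, the reduction ``nonzero effective on a Picard-rank-one variety $\Rightarrow$ ample'' is fine, but the argument for ``nonzero'' is not there.
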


This theorem is proved by using the \textit{Noether-Fano inequality} which is one of the most important tools in birational geometry, which gives a precise bound on the singularities of indeterminacies of a birational map and some conditions when it becomes isomorphism.

This inequality is essentially due to \cite{im71}, and Corti proved the general case of an arbitrary Mori fiber space of dimension three \cite{co95}.
It was extended in all dimensions in \cite{ta95}, \cite{bm97}, \cite{is01}, and \cite{df02}, (see also \cite{ma02}).
In particular, a log generalized version obtained independently in \cite{bm97}, \cite{ta95} is used for the proof of Theorem \ref{tak}.

After that, we consider $n$-dimensional \textit{Calabi-Yau manifold} $X$ in this paper.
It is a projective manifold which is simply connected,
\[
H^{0}(X, \Omega_{X}^{i}) = 0\ \ \ (0<i<\dim X = n),\ \ \textrm{and \ } H^{0}(X, \Omega_{X}^{n}) = \C \omega_{X},
\]
where $\omega_{X}$ is a nowhere vanishing holomorphic $n$-form.

The following theorem is a consequence of the Theorem \ref{tak}, which is same as Theorem \ref{intro2}.
This provides an example of the Calabi-Yau hypersurface $M_{n}$ whose inertia group consists of only identity transformation. 

\begin{thm}\label{ta}
Suppose $n \geq 3$. Let $M_{n} = (n+2) \subset \PP^{n+1}$ be a smooth hypersurface of degree $n+2$.
Then $M_{n}$ is a Calabi-Yau manifold of dimension $n$ and Question $\ref{qexact}$ is affirmative for $M_{n}$.
More precisely:
\begin{itemize}
\item[(1)] $\Dec(\PP^{n+1}, M_{n}) = \{ f \in \PGL(n+2, \C) = \Aut(\PP^{n+1})\ |\ f(M_{n}) = M_{n}\}$.
\item[(2)] $\Ine(\PP^{n+1}, M_{n}) = \{\id_{\PP^{n+1}}\}$, and $\gamma \cc \Dec(\PP^{n+1}, M_{n}) \overset{\simeq}{\longrightarrow} \Bir(M_{n}) = \Aut(M_{n})$.
\end{itemize}
\end{thm}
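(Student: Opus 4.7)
The plan is to deduce both assertions essentially as a corollary of Takahashi's Theorem \ref{tak} together with the classical Matsumura--Monsky theorem on automorphisms of smooth hypersurfaces.

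For (1), the first step is to verify that the hypotheses of Theorem \ref{tak} hold with $X = X' = \PP^{n+1}$ and $S = M_{n}$. Indeed $\PP^{n+1}$ is Fano of dimension $n+1 \geq 4$ with $\rho = 1$, smooth (hence $\Q$-factorial with terminal singularities), and $M_{n} \in |-K_{\PP^{n+1}}|$ since $-K_{\PP^{n+1}} = \OO_{\PP^{n+1}}(n+2)$. The surjectivity of $\Pic(\PP^{n+1}) \to \Pic(M_{n})$ follows from the Lefschetz hyperplane theorem, which gives an isomorphism on $H^{2}$ since $\dim M_{n} = n \geq 3$, combined with $h^{0,1}(M_{n}) = h^{0,2}(M_{n}) = 0$ (the Calabi--Yau condition). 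Now for any $f \in \Dec(\PP^{n+1}, M_{n})$, since $f|_{M_{n}}$ is birational and $f(M_{n}) = M_{n}$ set-theoretically, the strict transform satisfies $f_{*}M_{n} = M_{n}$. Applying Theorem \ref{tak} with $\Phi = f$, the non-isomorphism alternative would force $K_{\PP^{n+1}} + M_{n} \cong \OO_{\PP^{n+1}}$ to be ample, which is absurd. Hence every element of $\Dec(\PP^{n+1}, M_{n})$ is biregular, proving (1).

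For (2), let $f \in \Ine(\PP^{n+1}, M_{n})$. By (1) we may write $f = [A]$ for some class in $\PGL(n+2, \C)$, and $f|_{M_n} = \id$. The fixed locus of $f$ in $\PP^{n+1}$ is the disjoint union of the projectivizations of the eigenspaces of $A$; since $M_{n}$ is irreducible of dimension $n$, it must lie in a single such component $\PP(V_{\lambda})$. The case $\dim \PP(V_{\lambda}) = n$ is excluded because $\deg M_{n} = n+2 \geq 5$ prevents $M_n$ from being a hyperplane, so $V_{\lambda}$ is the full space, $A$ is scalar, and $f = \id_{\PP^{n+1}}$. For the surjectivity of $\gamma$, I would observe that $\Pic(M_{n}) \cong \Z$ and $K_{M_{n}} = 0$ together imply $\Bir(M_{n}) = \Aut(M_{n})$: any birational self-map is a pseudo-isomorphism by the standard negativity argument with trivial canonical, and Picard rank one rules out flops. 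Then by Matsumura--Monsky, applicable since $\dim M_{n} \geq 3$ and $\deg M_{n} \geq 5$, every element of $\Aut(M_{n})$ arises as the restriction of a linear automorphism of $\PP^{n+1}$ preserving $M_{n}$, hence lifts to $\Dec(\PP^{n+1}, M_{n})$.

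The argument is largely mechanical once Theorem \ref{tak} is available. The two points that deserve genuine care are the verification of the Picard-surjectivity hypothesis in Theorem \ref{tak} (where $n \geq 3$ is essential, in contrast with the $K3$ case $n=2$ discussed in Remark \ref{k3}) and the identity $\Bir(M_{n}) = \Aut(M_{n})$; all remaining steps reduce to elementary linear algebra on the fixed locus together with a direct appeal to Matsumura--Monsky.
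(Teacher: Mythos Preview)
Your proof is correct and follows essentially the same route as the paper: apply Theorem~\ref{tak} to force any $f\in\Dec(\PP^{n+1},M_n)$ to be linear, use $\Pic(M_n)=\Z h$ to get $\Bir(M_n)=\Aut(M_n)$, and then check that linear automorphisms fixing $M_n$ pointwise are trivial. The only cosmetic differences are that the paper (i) cites Kawamata \cite{ka08} for $\Bir=\Aut$ rather than sketching the pseudo-isomorphism/no-flop argument, (ii) obtains the lift of $g\in\Aut(M_n)$ directly from $g^*h=h$ and the embedding by $|h|$ instead of invoking Matsumura--Monsky, and (iii) argues $\Ine=\{\id\}$ via the one-line observation that $M_n$ is not contained in any hyperplane (its projective hull is $\PP^{n+1}$), which is equivalent to your eigenspace computation.
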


\begin{proof}
By Lefschetz hyperplane section theorem for $n \geq 3$, $\pi_{1}(M_{n}) \simeq \pi_{1}(\PP^{n+1}) = \{\id\}$, $\Pic(M_{n}) = \Z h$ where $h$ is the hyperplane class.
By the adjunction formula,
\[
K_{M_{n}} = (K_{\PP^{n+1}} + M_{n})|_{M_{n}} = -(n+2)h + (n+2)h = 0
\]
in Pic$(M_{n})$.

By the exact sequence
\[
0 \longrightarrow \OO_{\PP^{n+1}}(-(n+2)) \longrightarrow \OO_{\PP^{n+1}} \longrightarrow \OO_{M_{n}} \longrightarrow 0
\]
and
\[
h^{k}(\OO_{\PP^{n+1}}(-(n+2))) = 0\ \ \text{for}\ \ 1 \leq k \leq n,
\]
\[
H^{k}(\OO_{M_{n}}) \simeq H^{k}(\OO_{\PP^{n+1}}) = 0\ \ \text{for}\ \ 1 \leq k \leq n-1.
\]
Hence $H^{0}(\Omega^{k}_{M_{n}}) = 0$ for $1 \leq k \leq n-1$ by the Hodge symmetry.
Hence $M_{n}$ is a Calabi-Yau manifold of dimension $n$.

By $\Pic(M_{n}) = \Z h$, there is no small projective contraction of $M_{n}$, in particular, $M_{n}$ has no flop.
Thus by Kawamata \cite{ka08}, we get $\Bir(M_{n}) = \Aut(M_{n})$, and $g^{*}h = h$ for $g \in \Aut(M_{n}) = \Bir(M_{n})$.

So we have $g = \tilde{g}|_{M_{n}}$ for some $\tilde{g} \in \PGL(n+1, \C)$.
Assume that $f \in \Dec(\PP^{n+1}, M_{n})$.
Then $f_{*}(M_{n}) = M_{n}$ and $K_{\PP^{n+1}} + M_{n} = 0$.
Thus by Theorem \ref{tak}, $f \in \Aut(\PP^{n+1}) = \PGL(n+2, \C)$.
This proves (1) and the surjectivity of $\gamma$.

Let $f|_{M_{n}} = \id_{M_{n}}$ for $f \in \Dec(\PP^{n+1}, M_{n})$.
Since $f \in \PGL(n+1, \C)$ by (1) and $M_{n}$ generates $\PP^{n+1}$, i.e., the projective hull of $M_{n}$ is $\PP^{n+1}$, it follows that $f = \id_{\PP^{n+1}}$ if $f|_{M_{n}} = \id_{M_{n}}$.
Hence $\Ine(\PP^{n+1}, M_{n}) = \{\id_{\PP^{n+1}}\}$, i.e., $\gamma$ is injective.
So, $\gamma \cc \Dec(\PP^{n+1}, M_{n}) \overset{\simeq}{\longrightarrow} \Bir(M_{n}) = \Aut(M_{n})$.
\end{proof}


\section{Calabi-Yau hypersurface in $(\PP^{1})^{n+1}$}\label{cy1n}
As in above section, the Calabi-Yau hypersurface $M_{n}$ of $\PP^{n+1}$ with $n \geq 3$ has only identical transformation as the element of its inertia group.
However, there exist some Calabi-Yau hypersurfaces in the product of $\PP^{1}$ which does not satisfy this property; as result (Theorem \ref{main}) shows.

To simplify, we denote
\begin{align*}
P(n+1) &\ceq (\PP^{1})^{n+1} = \PP^{1}_{1} \times \PP^{1}_{2} \times \cdots \times \PP^{1}_{n+1},\\
P(n+1)_{i} &\ceq \PP^{1}_{1} \times \cdots \times \PP^{1}_{i-1} \times \PP^{1}_{i+1} \times \cdots \times \PP^{1}_{n+1} \simeq P(n),
\end{align*}
and
\begin{align*}
p^{i} \cc P(n+1) &\to \PP^{1}_{i} \simeq \PP^{1},\\
p_{i} \cc P(n+1) &\to P(n+1)_{i}
\end{align*}
as the natural projection.
Let $H_{i}$ be the divisor class of $(p^{i})^{*}(\OO_{\PP^{1}}(1))$, then $P(n+1)$ is a Fano manifold of dimension $n+1$ and its canonical divisor has the form $\displaystyle{-K_{P(n+1)} = \sum^{n+1}_{i=1}2H_{i}}$.
Therefore, by the adjunction formula, the generic hypersurface $X_{n} \subset P(n+1)$ has trivial canonical divisor if and only if it has multidegree $(2, 2, \ldots, 2)$.
More strongly, for $n \geq 3$, $X_{n} = (2, 2, \ldots, 2)$ becomes a Calabi-Yau manifold of dimension $n$ and, for $n=2$, a $K3$ surface (i.e. 2-dimensional Calabi-Yau manifold).
This is shown by the same method as in the proof of Theorem \ref{ta}.

From now, $X_{n}$ is a generic hypersurface of $P(n+1)$ of multidegree $(2, 2, \ldots , 2)$ with $n \geq 3$.
Let us write $P(n+1) = \PP^{1}_{i} \times P(n+1)_{i}$.
Let $[x_{i1} : x_{i2}]$ be the homogeneous coordinates of $\PP^{1}_{i}$.
Hereafter, we consider the affine locus and denote by $\displaystyle x_{i} = \frac{x_{i2}}{x_{i1}}$ the affine coordinates of $\PP^{1}_{i}$ and by ${\bf z}_{i}$ that of $P(n+1)_{i}$.
When we pay attention to $x_{i}$, $X_{n}$ can be written by following equation
\begin{align}\label{xn}
X_{n} = \{ F_{i,0}({\bf z}_{i})x_{i}^{2} + F_{i,1}({\bf z}_{i})x_{i} + F_{i,2}({\bf z}_{i}) = 0 \}
\end{align}
where each $F_{i,j}({\bf z}_{i})$ $(j = 0, 1, 2)$ is a quadratic polynomial of ${\bf z}_{i}$.
Now, we consider the two involutions of $P(n+1)$:
\begin{align}
\tau_{i} \cc (x_{i}, {\bf z}_{i}) &\to \left(-x_{i}- \frac{F_{i,1}({\bf z}_{i})}{F_{i,0}({\bf z}_{i})}, {\bf z}_{i} \right)\label{tau}\\
\sigma_{i} \cc (x_{i}, {\bf z}_{i}) &\to \left(\frac{F_{i,2}({\bf z}_{i})}{x_{i} \cdot F_{i,0}({\bf z}_{i})}, {\bf z}_{i} \right).\label{sigma}
\end{align}
Then $\tau_{i}|_{X_{n}} = \sigma_{i}|_{X_{n}} = \iota_{i}$ by definition of $\iota_{i}$ (cf. Theorem \ref{iota}).

We get two birational automorphisms of $X_{n}$
\begin{align*}
\rho_{i} = \sigma_{i} \circ \tau_{i} \cc (x_{i}, {\bf z}_{i}) &\to \left( \frac{F_{i,2}({\bf z}_{i})}{-x_{i} \cdot F_{i,0}({\bf z}_{i}) - F_{i,1}({\bf z}_{i})}, \  {\bf z}_{i} \right)\\
\rho'_{i} = \tau_{i} \circ \sigma_{i} \cc (x_{i}, {\bf z}_{i}) &\to \left( -\frac{x_{i} \cdot F_{i,1}({\bf z}_{i}) + F_{i,2}({\bf z}_{i})}{x_{i}\cdot F_{i,0}({\bf z}_{i})}, \  {\bf z}_{i} \right).
\end{align*}
Obviously, both $\rho_{i}$ and $\rho'_{i}$ are in Ine$(P(n+1), X_{n})$, map points not in $X_{n}$ to other points also not in $X_{n}$, and $\rho_{i}^{-1} = \rho'_{i}$ by $\tau_{i}^{2} = \sigma_{i}^{2} = \id_{P(n+1)}$.

\begin{prp}\label{order}
Each $\rho_{i}$ has infinite order.
\end{prp}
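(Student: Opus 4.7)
The plan is to study $\rho_i$ fibrewise via the projection $p_i\colon P(n+1)\to P(n+1)_i$. Since both $\tau_i$ and $\sigma_i$ fix ${\bf z}_i$, so does $\rho_i$, and for each point ${\bf z}_i\in P(n+1)_i$ the restriction of $\rho_i$ to the fibre $\PP^{1}_{i}\times\{{\bf z}_i\}$ is a Möbius transformation in $x_i$. Reading off the given formula, this Möbius transformation is represented by the matrix
\[
M({\bf z}_i)\ceq\begin{pmatrix}0 & -F_{i,2}({\bf z}_i)\\ F_{i,0}({\bf z}_i) & F_{i,1}({\bf z}_i)\end{pmatrix},
\]
whose trace and determinant are $F_{i,1}({\bf z}_i)$ and $F_{i,0}({\bf z}_i)F_{i,2}({\bf z}_i)$, respectively.

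Next I would invoke the classical fact that a non-scalar element of $\PGL_{2}(\C)$ represented by a matrix $M$ has finite order dividing $N$ if and only if $(\operatorname{tr} M)^{2}/\det M$ lies in the finite set $\{2+2\cos(2\pi k/N):0\leq k<N\}$. Setting
\[
f({\bf z}_i)\ceq\frac{F_{i,1}({\bf z}_i)^{2}}{F_{i,0}({\bf z}_i)\,F_{i,2}({\bf z}_i)},
\]
an identity $\rho_i^{N}=\id$ in $\Bir(P(n+1))$ would force the rational function $f$ to take values in this finite set on the dense Zariski open subset of $P(n+1)_i$ where $M({\bf z}_i)$ is non-scalar. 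Since $P(n+1)_i$ is irreducible, a non-constant rational function to $\PP^{1}$ cannot take only finitely many values on a dense open set, so this in turn would force $f$ to be a constant rational function on $P(n+1)_i$.

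The proof therefore reduces to checking that $f$ is non-constant for the generic $X_n$. Constancy of $f$ amounts to the proportionality $F_{i,1}^{2}\equiv c\,F_{i,0}F_{i,2}$ of two polynomials of fixed multidegree, which is a proper Zariski closed condition on the linear system $|(2,2,\dots,2)|$; thus non-constancy of $f$ is a Zariski open and generically-satisfied condition, provided it holds at even a single point of the linear system. I expect this last verification to be the only real obstacle: one simply needs to exhibit an explicit hypersurface $X_n$---for instance by choosing the three coefficient polynomials $F_{i,0},F_{i,1},F_{i,2}$ to have distinct irreducible zero divisors---for which $F_{i,1}^{2}$ and $F_{i,0}F_{i,2}$ are manifestly non-proportional. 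Everything else is a formal consequence of the fibration structure of $\rho_i$ over $p_i$ and the classification of finite-order Möbius transformations.
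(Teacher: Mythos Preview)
Your argument is correct and follows essentially the same route as the paper: both represent $\rho_i$ fibrewise over $p_i$ by the $2\times 2$ matrix with entries $F_{i,j}({\bf z}_i)$ and deduce that finite order in $\PGL_2$ forces an algebraic identity among the $F_{i,j}$ that fails for generic $X_n$. The paper phrases the obstruction as the eigenvalue ratio $\bigl(-F_{i,1}+\sqrt{F_{i,1}^2-4F_{i,0}F_{i,2}}\bigr)\big/\bigl(-F_{i,1}-\sqrt{F_{i,1}^2-4F_{i,0}F_{i,2}}\bigr)$ being a root of unity, while you use the equivalent rational invariant $(\operatorname{tr} M)^2/\det M=F_{i,1}^2/(F_{i,0}F_{i,2})$; your version has the minor advantage of making explicit that the bad locus is the single Zariski-closed condition $F_{i,1}^2\equiv c\,F_{i,0}F_{i,2}$, rather than leaving the appeal to genericity implicit.
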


\begin{proof}
By the definiton of $\rho_{i}$ and $\rho'_{i} = \rho_{i}^{-1}$, it suffices to show	
\begin{align*}
{\begin{pmatrix}
0 & F_{i,2}\\
-F_{i,0} & -F_{i,1}
\end{pmatrix}}^{k}
\neq \alpha I
\end{align*}
for any $k \in \Z \setminus \{0\}$ where $I$ is an identity matrix and $\alpha \in \C^{\times}$.
Their eigenvalues are
\[
\frac{-F_{i,1} \pm \sqrt{F_{i,1}^{2} - 4F_{i,0}F_{i,2}}}{2}.
\]
Here $F_{i,1}^{2} - 4F_{i,0}F_{i,2} \neq 0$ as $X_{n}$ is general (for all $i$).

If $\begin{pmatrix}
0 & F_{i,2}\\
-F_{i,0} & -F_{i,1}
\end{pmatrix}^{k}
= \alpha I$
for some $k \in \Z \setminus \{0\}$ and $\alpha \in \C^{\times}$, then
\[
\left(\frac{-F_{i,1} + \sqrt{F_{i,1}^{2} - 4F_{i,0}F_{i,2}}}{-F_{i,1} - \sqrt{F_{i,1}^{2} - 4F_{i,0}F_{i,2}}}\right)^{k} = 1,
\]
a contradiction to the assumption that $X_{n}$ is generic.
\end{proof}
We also remark that Proposition \ref{order} is also implicitly proved in Theorem \ref{main}.

Our main result is the following (which is same as Theorem \ref{intro}):

\begin{thm}\label{main}
Let $X_{n} \subset P(n+1)$ be a smooth hypersurface of multidegree $(2, 2, \ldots, 2)$ and $n \geq 3$. Then:
\begin{itemize}
\item[(1)] $\gamma \cc \Dec(P(n+1), X_{n}) \to \Bir(X_{n})$ is surjective, in particular Question $\ref{qexact}$ is affirmative for $X_{n}$.
\item[(2)] If, in addition, $X_{n}$ is generic, $n+1$ elements $\rho_{i} \in \Ine(P(n+1), X_{n})$ $(1 \leq i \leq n+1)$ satisfy
\[
\langle \rho_{1}, \rho_{2}, \ldots , \rho_{n+1} \rangle \simeq \underbrace{\Z * \Z * \cdots * \Z}_{n+1} \subset \Ine(P(n+1), X_{n}).
\]
In particular, $\Ine(P(n+1), X_{n})$ is an infinite non-commutative group.
\end{itemize}
\end{thm}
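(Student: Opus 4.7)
My plan for Theorem \ref{main} splits into its two parts.

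For part (1), I would first observe that each of the $n+1$ generating involutions $\iota_i \in \Bir(X_n)$ is by construction the restriction to $X_n$ of the explicit Cremona involution $\tau_i$ (equivalently $\sigma_i$) of the ambient $P(n+1)$, so $\tau_i \in \Dec(P(n+1), X_n)$ and $\gamma(\tau_i) = \iota_i$. By Theorem \ref{iota} of Cantat-Oguiso, for generic $X_n$ the whole group $\Bir(X_n)$ is generated by the $\iota_i$, so surjectivity of $\gamma$ for generic $X_n$ is immediate. For an arbitrary smooth $X_n$ of multidegree $(2,\dots,2)$, I would extend this by analyzing the induced action of $\Bir(X_n)$ on the Picard lattice $\Pic(X_n) \supset \Z\langle H_1|_{X_n}, \dots, H_{n+1}|_{X_n}\rangle$: any extra birational self-map either permutes the restricted classes $H_i|_{X_n}$ (and so comes from a linear automorphism of $P(n+1)$ preserving $X_n$, which tautologically lies in $\Dec(P(n+1),X_n)$) or can be composed with the $\iota_i$'s to reduce to this case.

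For part (2), the plan is to show that any non-trivial reduced word
\[
w = \rho_{i_k}^{a_k}\circ\cdots\circ\rho_{i_1}^{a_1} \qquad (i_j\neq i_{j+1},\ a_j \in \Z\setminus\{0\})
\]
represents a non-identity birational self-map of $P(n+1)$. The key structural input is that each $\rho_i$ lies in the Jonqui\`eres subgroup $\PGL_2(\C(\mathbf{z}_i)) \subset \Bir(P(n+1))$, acting on the $\PP^1$-fibers of $p_i$ as the M\"obius transformation with matrix $A_i = \bigl(\begin{smallmatrix}0&F_{i,2}\\-F_{i,0}&-F_{i,1}\end{smallmatrix}\bigr)$; by Proposition \ref{order}, this matrix has infinite order in $\PGL_2(\C(\mathbf{z}_i))$ for generic $X_n$. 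Moreover, for $i\neq j$ the subfields $\C(\mathbf{z}_i)$ and $\C(\mathbf{z}_j)$ together generate the whole function field $\C(x_1,\dots,x_{n+1})$, so these subgroups pairwise intersect only in the identity.

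Given this setup, I would prove non-triviality of $w$ by tracking the pullback $w^*(x_{i_1})$. Each $(\rho_i^a)^*$ fixes every coordinate $x_m$ with $m \neq i$ and replaces $x_i$ by the M\"obius expression $\mu_{i,a}(x_i;\mathbf{z}_i)$ whose coefficients are the entries of $A_i^a$, so iterating these substitutions yields a deeply nested rational expression in $x_1,\dots,x_{n+1}$. I would then introduce an appropriate multi-degree on $\C(x_1,\dots,x_{n+1})$---for instance, the vector of $x_l$-degrees, where the $x_l$-degree of $f$ is its degree as a rational function of $x_l$ over $\C(\widehat{x}_l)$---and prove by induction on the word length that this multi-degree strictly grows with every new letter added to a reduced word. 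Consequently $w^*(x_{i_1})\neq x_{i_1}$, and hence $w\neq\id$.

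The hard part will be establishing this monotone growth of the multi-degree: a priori, substitution into a nested M\"obius expression could produce cancellations that decrease the complexity. Ruling this out is precisely where the genericity of $X_n$ enters: for sufficiently general $F_{i,j}$, no algebraic identity among the entries of the various $A_i^a$ can force such cancellations. I would establish this either by exhibiting one explicit $X_n$ for which the growth is manifest and invoking semicontinuity, or by a direct leading-term analysis at each substitution step, which should suffice to conclude the free product structure claimed in the statement.
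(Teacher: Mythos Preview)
Your plan for part~(1) coincides with the paper's for generic $X_n$: both lift each generator $\iota_i$ via the explicit Cremona involution $\tau_i$ (or $\sigma_i$) and invoke Cantat--Oguiso. For arbitrary smooth $X_n$, your Picard-lattice sketch and the paper's one-line remark (``automorphisms of $X_n$ come from those of $P(n+1)$'') are both terse and would each need a bit more to be airtight; this is not where the substance lies.

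For part~(2) your route is genuinely different from the paper's, and considerably harder. The paper does \emph{not} track degrees of pullbacks at all; it gives a short pointwise argument that exploits precisely the feature your approach ignores, namely that the $\rho_i$ lie in the \emph{inertia} group. Suppose a reduced word $w=\rho_{i_1}^{n_1}\circ\cdots\circ\rho_{i_l}^{n_l}$ were the identity. Pick a general point of the hyperplane $\{x_{i_1}=0\}$ lying \emph{off} $X_n$ and off all indeterminacy loci; write it $(0,\mathbf{z}_{i_1})$. Since $w=\id$, this point is fixed by $w$. Peel off the leftmost letter (say $n_1>0$; the case $n_1<0$ is handled symmetrically at $x_{i_1}=\infty$) and set $(p,\mathbf{z}'_{i_1})\ceq\rho_{i_1}^{n_1-1}\circ\cdots\circ\rho_{i_l}^{n_l}(0,\mathbf{z}_{i_1})$. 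Then $\rho_{i_1}(p,\mathbf{z}'_{i_1})=(0,\mathbf{z}_{i_1})$; since $\rho_{i_1}$ fixes the $\mathbf{z}_{i_1}$-coordinates, $\mathbf{z}'_{i_1}=\mathbf{z}_{i_1}$. But the explicit formula $\rho_{i_1}\colon x_{i_1}\mapsto F_{i_1,2}(\mathbf{z}_{i_1})/\bigl(-x_{i_1}F_{i_1,0}(\mathbf{z}_{i_1})-F_{i_1,1}(\mathbf{z}_{i_1})\bigr)$ can output $0$ only when $F_{i_1,2}(\mathbf{z}_{i_1})=0$, i.e.\ only when the image point $(0,\mathbf{z}_{i_1})$ lies on $X_n\cap\{x_{i_1}=0\}$. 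This contradicts the choice of the initial point.

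The conceptual point is that $\rho_i$ can hit the hyperplane $\{x_i=0\}$ only along $X_n$, while every $\rho_j$ (being in $\Ine$) preserves the complement of $X_n$; so a word in the $\rho_j$ starting off $X_n$ can never return to $\{x_{i_1}=0\}\setminus X_n$ with $\rho_{i_1}^{\pm1}$ as its last letter. Your multi-degree strategy does not use this structure, which is why you are left facing the cancellation problem you flag as ``the hard part''. That problem may be tractable by specialization and semicontinuity as you suggest, but the paper's argument sidesteps it entirely and needs essentially nothing beyond Proposition~\ref{order} and the explicit numerator $F_{i,2}$.
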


Let $\Ind(\rho)$ be the union of the indeterminacy loci of each $\rho_{i}$ and $\rho^{-1}_{i}$; that is, $\displaystyle \Ind(\rho) = \bigcup_{i=1}^{n+1}\big(\Ind(\rho_{i}) \cup \Ind(\rho^{-1}_{i})\big)$ where $\Ind(\rho_{i})$ is the indeterminacy locus of $\rho_{i}$.
Clearly, $\Ind(\rho)$ has codimension $\geq 2$ in $P(n+1)$.

\begin{proof}
Let us show Theorem \ref{main} (1).
Suppose $X_{n}$ is generic.
For a general point $x \in P(n+1)_{i}$, the set $p_{i}^{-1}(x)$ consists of two points.
When we put these two points $y$ and $y'$, then the correspondence $y \leftrightarrow y'$ defines a natural birational involutions of $X_{n}$, and this is the involution $\iota_{j}$.
Then, by Cantat-Oguiso's result \cite[Theorem 3.3 (4)]{co11}, $\Bir(X_{n})$ $(n\geq 3)$ coincides with the group $\langle \iota_{1}, \iota_{2}, \ldots , \iota_{n+1} \rangle \simeq  \underbrace{\Z/2\Z * \Z/2\Z * \cdots * \Z/2\Z}_{n+1}$.

Two involutions $\tau_{j}$ and $\sigma_{j}$ of $X_{n}$ which we construct in \eqref{tau} and \eqref{sigma} are the extensions of the covering involutions $\iota_{j}$.
Hence, $\tau_{j}|_{X_{n}} = \sigma_{j}|_{X_{n}} = \iota_{j}$.
Thus $\gamma$ is surjective.
Since automorphisms of $X_{n}$ come from that of total space $P(n+1)$, it holds the case that $X_{n}$ is not generic.
This completes the proof of Theorem \ref{main} (1).

Then, we show Theorem \ref{main} (2).
By Proposition \ref{order}, order of each $\rho_{i}$ is infinite.
Thus it is sufficient to show that there is no non-trivial relation between its $n + 1$ elements $\rho_{i}$.
We show by arguing by contradiction.

Suppose to the contrary that there is a non-trivial relation between $n+1$ elements $\rho_{i}$, that is, there exists some positive integer $N$ such that
\begin{align}\label{rho}
\rho_{i_{1}}^{n_{1}} \circ \rho_{i_{2}}^{n_{2}} \circ \cdots \circ \rho_{i_{l}}^{n_{l}} = \id_{P(n+1)}
\end{align}
where $l$ is a positive integer, $n_{k} \in \Z\setminus\{0\}$ $(1\leq k \leq l)$, and each $\rho_{i_{k}}$ denotes one of the $n + 1$ elements $\rho_{i}$ $(1 \leq i \leq n+1)$ and satisfies $\rho_{i_{k}} \neq \rho_{i_{k+1}}$ $(0 \leq k \leq l-1)$.
Put $N = |n_{1}| + \cdots + |n_{l}|$.

In the affine coordinates $(x_{i_{1}}, {\bf z}_{i_{1}})$ where $x_{i_{1}}$ is the affine coordinates of $i_{1}$-th factor $\PP^{1}_{i_{1}}$, we can choose two distinct points $(\alpha_{1}, {\bf z}_{i_{1}})$ and $(\alpha_{2}, {\bf z}_{i_{1}})$, $\alpha_{1} \neq \alpha_{2}$, which are not included in both $X_{n}$ and $\Ind(\rho)$.

By a suitable projective linear coordinate change of $\PP^{1}_{i_{1}}$, we can set $\alpha_{1} = 0$ and $\alpha_{2} = \infty$.
When we pay attention to the $i_{1}$-th element $x_{i_{1}}$ of the new coordinates, we put same letters $F_{i_{1},j}({\bf z}_{i_{1}})$ for the definitional equation of $X_{n}$, that is, $X_{n}$ can be written by
\[
X_{n} = \{ F_{i_{1},0}({\bf z}_{i_{1}})x_{i_{1}}^{2} + F_{i_{1},1}({\bf z}_{i_{1}})x_{i_{1}} + F_{i_{1},2}({\bf z}_{i_{1}}) = 0 \}.
\]
Here the two points $(0, {\bf z}_{i_{1}})$ and $(\infty, {\bf z}_{i_{1}})$ not included in $X_{n} \cup \Ind(\rho)$.
From the assumption, both two equalities hold:
\begin{numcases}
{}
\rho_{i_{1}}^{n_{1}} \circ \cdots \circ \rho_{i_{l}}^{n_{l}}(0, {\bf z}_{i_{1}}) = (0, {\bf z}_{i_{1}}) & \\
\rho_{i_{1}}^{n_{1}} \circ \cdots \circ \rho_{i_{l}}^{n_{l}}(\infty, {\bf z}_{i_{1}}) = (\infty, {\bf z}_{i_{1}}).\label{infty}
\end{numcases}

We proceed by dividing into the following two cases.

{\flushleft
(i). The case where $n_{1} > 0$.
Write $\rho_{i_{1}} \circ \rho_{i_{1}}^{n_{1}-1} \circ \rho_{i_{2}}^{n_{2}} \circ  \cdots \circ \rho_{i_{l}}^{n_{l}} = \id_{P(n+1)}$.
}

Let us denote $\rho_{i_{1}}^{n_{1}-1} \circ \cdots \circ \rho_{i_{l}}^{n_{l}}(0, {\bf z}_{i_{1}}) = (p, {\bf z}_{i_{1}}')$, then, by the definition of $\rho_{i_{1}}$, it maps $p$ to $0$.
That is, the equation $F_{i_{1},2}({\bf z}'_{i_{1}}) = 0$ is satisfied.
On the other hand, the intersection of $X_{n}$ and the hyperplane $(x_{i_{1}}=0)$ is written by
\[
X_{n} \cap (x_{i_{1}}=0) = \{F_{i_{1},2}({\bf z}_{i_{1}}) = 0\}.
\]
This implies $(0, {\bf z}'_{i_{1}}) = \rho_{i_{1}}(p, {\bf z}'_{i_{1}}) = (0, {\bf z}_{i_{1}})$ is a point on $X_{n}$, a contradiction to the fact that $(0, {\bf z}_{i_{1}}) \notin X_{n}$.

{\flushleft
(ii). The case where $n_{1} < 0$.
Write $\rho^{-1}_{i_{1}} \circ \rho_{i_{1}}^{n_{1}+1} \circ \rho_{i_{2}}^{n_{2}} \circ  \cdots \circ \rho_{i_{l}}^{n_{l}} = \id_{P(n+1)}$.
}

By using the assumption \eqref{infty}, we lead the contradiction by the same way as in (i).
Precisely, we argue as follows.

Let us write $\displaystyle x_{i_{1}} = \frac{1}{y_{i_{1}}}$, then $(x_{i_{1}} = \infty, {\bf z}_{i_{1}}) = (y_{i_{1}} = 0, {\bf z}_{i_{1}})$ and $X_{n}$ and $\rho^{-1}_{i_{1}}$ can be written by
\[
X_{n} \ceq \{F_{i_{1},0}({\bf z}_{i_{1}}) + F_{i_{1},1}({\bf z}_{i_{1}})y_{i_{1}} + F_{i_{1},2}({\bf z}_{i_{1}})y_{i_{1}}^{2} = 0\},
\]
\[
\rho^{-1}_{i_{1}} \cc (y_{i_{1}}, {\bf z}_{i_{1}}) \to \left(\ -\frac{F_{i_{1},0}({\bf z}_{i_{1}})}{F_{i_{1},1}({\bf z}_{i_{1}}) + y_{i_{1}}\cdot F_{i_{1},2}({\bf z}_{i_{1}})},\ {\bf z}_{i_{1}} \right).
\]

Let us denote $ \rho_{i_{1}}^{n_{1}+1} \circ \rho_{i_{2}}^{n_{2}} \circ  \cdots \circ \rho_{i_{l}}^{n_{l}} (y_{i_{1}} =0, {\bf z}_{i_{1}}) = (y_{i_{1}} = q, {\bf z}_{i_{1}}'')$, then $\rho^{-1}_{i_{1}}$ maps $q$ to $0$.
That is, the equation $F_{i_{1},0}({\bf z}''_{i_{1}}) = 0$ is satisfied, but the intersection of $X_{n}$ and the hyperplane $(y_{i_{1}} = 0)$ is written by
\[
X_{n}\cap (y_{i_{1}} = 0) = \{F_{i_{1},0}({\bf z}_{i_{1}}) = 0\}.
\]
This implies $(y_{i_{1}}=0, {\bf z}''_{i_{1}}) = \rho^{-1}_{i_{1}}(y_{i_{1}} = q, {\bf z}_{i_{1}}'') = (x_{i_{1}}=\infty, {\bf z}_{i_{1}})$ is a point on $X_{n}$; that is, $(x_{i_{1}}=\infty, {\bf z}_{i_{1}}) \in X_{n} \cap (x_{i_{1}}=\infty)$.
This is contradiction.

\par

From (i) and (ii), we can conclude that there does not exist such $N$.
This completes the proof of Theorem \ref{main} (2).
\end{proof}

Note that, for the cases $n = 2$ and $1$, Theorem \ref{main} (2) also holds though (1) does not hold.

\vspace{1cm}

{\bf Acknowledgements: } The authors would like to express their sincere gratitude to their supervisor Professor Keiji Oguiso who suggested this subject and has given much encouragement and invaluable and helpful advices.

\vspace{0.5cm}

\end{document}